\newcommand{\quash}[1]{}
\renewcommand\mod[1]{\ (\mathop{\rm mod}#1)}
\renewcommand{\mod}{\bmod}
\title[Maximum Spread of Non-Negative Matrices]{On the Maximum Spread of Non-Negative Matrices}
\author{Susie Lu}
\address{Department of Mathematics, Massachusetts Institute of Technology, \newline \indent Cambridge, MA, 02139 USA.}
\email{susielu@mit.edu}
\author{John Urschel}
\email{urschel@mit.edu}
\subjclass[2020]{Primary 05C50}
\keywords{spread, eigenvalues, adjacency matrix, directed graph, non-negative matrix}
\newtheorem{theorem}{Theorem}
\newtheorem{lemma}[theorem]{Lemma}
\newtheorem{corollary}[theorem]{Corollary}
\begin{document}

\begin{abstract}
Given a directed graph $G$, the spread of $G$ is the largest distance between any two eigenvalues of its adjacency matrix. In 2022, Breen, Riasanovsky, Tait, and Urschel  asked what $n$-vertex directed graph maximizes spread, and whether this graph is undirected. We prove the more general result that the spread of any $n \times n$ non-negative matrix $A$ with $\|A\|_{\max} \le 1$ is at most $2n/\sqrt{3}$, which is tight up to an additive factor and exact when $n$ is a multiple of three. Furthermore, our results show that the matrix with maximum spread is always symmetric.
\end{abstract}

\maketitle
The spread of an $n \times n$ complex matrix $M$ is the diameter of its spectrum $\Lambda(M)$: that is,
$$\mathrm{diam}(\Lambda(M)) = \max_{\lambda, \lambda' \in \Lambda(M)} |\lambda - \lambda'|.$$
General upper and lower bounds on $s(M)$ have been established by Mirsky \cite[Theorem 2]{Mirsky65} and by Johnson, Kumar, and Wolkowicz \cite[Theorem 2.1]{Johnson85}. 

Consider a (possibly directed) graph $G = (V, E)$ of order $n$. The adjacency matrix $A$ of $G$ is the $n \times n$ matrix whose rows and columns are indexed by the vertices of $G$, with entries satisfying $A_{i,j} = 1$ if $(i,j) \in E$ and $A_{i,j} = 0$ otherwise. The spread of the adjacency matrix of a graph $G$ is
$$ \mathrm{diam}(\Lambda(A)) = \max_{\lambda, \lambda' \in \Lambda(A)} |\lambda - \lambda'|, $$
which is called \emph{the spread of the graph $G$}. In the special case when the graph $G$ is undirected, the adjacency matrix is symmetric, so its eigenvalues are real and can be ordered $\lambda_1(A) \ge \lambda_2(A) \ge \dots \ge \lambda_n(A)$. Then, $\mathrm{diam}(\Lambda(A))  = \lambda_1(A) - \lambda_n(A)$. In 2001, Gregory, Hershkowitz, and Kirkland initiated a detailed analysis of the spread of a graph \cite{gregory2001spread}, leading to a large literature on the subject under various conditions and matrix representations. See, for instance, \cite{andrade2019new,andrade2019bounds,bao2009laplacian,chen2009laplacian,fan2012edge,fan2008laplacian,li2007spread,lin2020aalpha,lin2020bounds,linz2023maximum,liu2009spread,liu2010signless,oliveira2010bounds,xu2011laplacian,you2017distance} and the references therein.

In 2022, Breen, Riasanovsky, Tait, and Urschel characterized the graphs with maximum spread among all $n$-vertex undirected graphs, both allowing and not allowing loops \cite{Breen22}, which solved a main conjecture of Gregory, Hershkowitz, and Kirkland for $n$ sufficiently large. To state their results, we let $\|A\|_{\max}= \max_{i,j} |A_{ij}|$ denote the largest magnitude entry in a matrix $A$, and $A \otimes B$ denote the Kronecker product of two matrices $A$ and $B$.

\begin{theorem}[{\cite[Theorem 1.1 \& 1.3, Corollary 1.4]{Breen22}}]\label{thm:spread}
Let $A$ be an $n \times n$ symmetric non-negative matrix. Then
\[\lambda_{1}(A) - \lambda_{n}(A) \le \frac{2 \,n}{\sqrt{3}}  \, \|A\|_{\max},\]
with equality if and only if $n = 0 \mod 3$ and $A = P_{\pi} \left(\begin{psmallmatrix} 1 & 1 & 1 \\ 1 & 1 & 1 \\ 1 & 1 & 0  \end{psmallmatrix} \otimes J_{\frac{n}{3}}\right)P_{\pi}$, where $J_k$ is the $k \times k$ all-ones matrix and $P_{\pi}$ is a permutation matrix. Furthermore:
\begin{enumerate}
\item Any matrix $A$ that maximizes $\big(\lambda_{1}(A) - \lambda_{n}(A)\big)$ over all $n \times n$ symmetric non-negative matrices with $\|A\|_{\max} = 1$ is a $(0,1)$ matrix and has spread at least $2n/\sqrt{3} - 1/(2 \sqrt{3} n)$.
\item The matrix $A$ that maximizes $\big(\lambda_{1}(A) - \lambda_{n}(A)\big)$ over all $n \times n$ zero-diagonal symmetric non-negative matrices with $\|A\|_{\max} =1$ is exactly the adjacency matrix of the join of a clique on $\lfloor 2n/3 \rfloor$ vertices and an independent set on $\lceil n/3 \rceil$ vertices for all $n$ sufficiently large. This matrix always has spread at least $(2n -1)/\sqrt{3}$ for $n>1$.
\end{enumerate}
\end{theorem}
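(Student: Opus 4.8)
The plan is to convert the spectral quantity into a geometric extremal problem about points in $\R^2$ (equivalently, complex numbers), solve that problem exactly, and read off the extremal matrix. Normalize so that $\|A\|_{\max}=1$, hence $0\le A_{ij}\le 1$. Let $u,v$ be orthonormal real eigenvectors for $\lambda_1$ and $\lambda_n$ (the case $\lambda_1=\lambda_n$ being trivial). Writing the spread as a difference of Rayleigh quotients,
\[\lambda_1(A)-\lambda_n(A)=u^{\top}Au-v^{\top}Av=\sum_{i,j}A_{ij}(u_iu_j-v_iv_j)\le\sum_{i,j}(u_iu_j-v_iv_j)_+,\]
with $t_+=\max(t,0)$, the inequality using $0\le A_{ij}\le 1$. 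Equality forces $A_{ij}=1$ where $u_iu_j-v_iv_j>0$ and $A_{ij}=0$ where it is negative, which is the mechanism behind the $0$–$1$ and symmetry conclusions.

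Next I complexify: set $z_k=u_k+iv_k$, so that $u_iu_j-v_iv_j=\operatorname{Re}(z_iz_j)$, while orthonormality of $u,v$ is exactly $\sum_k|z_k|^2=2$ and $\sum_k z_k^2=0$. Thus every $\lambda_1-\lambda_n$ is bounded by
\[G:=\max\Big\{\sum_{i,j}(\operatorname{Re}(z_iz_j))_+ :\ \sum_k|z_k|^2=2,\ \sum_k z_k^2=0\Big\}.\]
Conversely, given feasible $z$, the symmetric $0$–$1$ matrix with $A_{ij}=1$ iff $\operatorname{Re}(z_iz_j)>0$ satisfies $u^{\top}Au-v^{\top}Av=\sum_{i,j}(\operatorname{Re}(z_iz_j))_+$ for the unit orthogonal vectors $u=\operatorname{Re}z$, $v=\operatorname{Im}z$, so its spread is at least that value; hence the maximum spread equals $G$ and is attained by a symmetric $0$–$1$ matrix. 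Writing $z_k=r_ke^{i\theta_k}$ and $S=\sum_k z_k$, the identity $(\cos\phi)_+=\tfrac12\cos\phi+\tfrac12|\cos\phi|$ gives
\[\sum_{i,j}(\operatorname{Re}(z_iz_j))_+=\tfrac12\operatorname{Re}(S^2)+\tfrac12\sum_{i,j}r_ir_j|\cos(\theta_i+\theta_j)|.\]
The trivial bound $|\cos|\le 1$ only yields $G\le(\sum_k r_k)^2\le 2n$; extracting the sharp constant $2/\sqrt3$ requires controlling the $|\cos|$ term genuinely.

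The heart of the argument is the optimization defining $G$. I would first show that some maximizer uses only boundedly many distinct values $z_k$: the stationarity (KKT) conditions equate, up to the Lagrange multipliers of the two constraints, the vector $\sum_{j:\,\operatorname{Re}(z_iz_j)>0}\overline{z_j}$ with a fixed linear function of $z_i$, so points subtending the same positive half-plane $\{w:\operatorname{Re}(z_iw)>0\}$ must coincide; a convexity/rearrangement argument in the radii then collapses the configuration to finitely many directions. A finite-dimensional optimization over these reduced configurations identifies the extremum as the two-value configuration with multiplicities $2n/3$ and $n/3$ — equivalently the matrix $\big(\begin{psmallmatrix}1&1&1\\1&1&1\\1&1&0\end{psmallmatrix}\otimes J_{n/3}\big)$, whose nonzero eigenvalues are $\tfrac n3(1\pm\sqrt3)$ (the rest being $0$) and whose spread is exactly $\tfrac n3\big((1+\sqrt3)-(1-\sqrt3)\big)=2n/\sqrt3$. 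The difficulty is entirely in proving optimality and uniqueness: excluding every competing angular pattern and pinning the multiplicity ratio to $2{:}1$, which simultaneously yields the equality characterization.

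For $3\nmid n$ one replaces $n/3$ by $\lfloor\cdot\rfloor$ and $\lceil\cdot\rceil$ and estimates the spread of the rounded matrix to obtain the lower bound $2n/\sqrt3-1/(2\sqrt3\,n)$; comparing with the upper bound forces every maximizer to be a symmetric $0$–$1$ matrix (item 1), with equality in the main inequality precisely when $3\mid n$. For the zero-diagonal case (item 2) the same scheme is rerun with the diagonal entries constrained to zero, which turns the all-ones diagonal blocks into loopless cliques and produces the join of a clique on $\lfloor 2n/3\rfloor$ vertices with an independent set on $\lceil n/3\rceil$ vertices; a direct eigenvalue computation on this join gives its spread $\ge(2n-1)/\sqrt3$. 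I expect the third step — the reduction to finitely many directions together with the exact finite optimization and the stability estimate needed for the equality case — to be by far the hardest part, the remainder being reduction and computation.
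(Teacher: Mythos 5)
A preliminary remark: the paper you are working from does not prove this statement at all; it is imported verbatim from Breen, Riasanovsky, Tait, and Urschel \cite{Breen22}, whose proof the present authors describe as ``nearly $70$ pages of complicated mathematics and computation.'' So your attempt has to be measured against that cited proof. Your opening reduction is correct, and it is essentially the same starting point used there: writing the spread as $u^{\top}Au - v^{\top}Av$ for orthonormal eigenvectors, bounding it by $\sum_{i,j}(u_iu_j-v_iv_j)_+$, observing that maximizers are symmetric $0$--$1$ matrices following the sign pattern of $u_iu_j-v_iv_j$, and recasting the problem as the extremal problem $G$ over complex vectors with $\sum_k |z_k|^2 = 2$ and $\sum_k z_k^2 = 0$ (this is, in different clothing, the two-dimensional vertex-weight optimization that the graphon formulation of \cite{Breen22} encodes). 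Your verification that $\begin{psmallmatrix} 1 & 1 & 1 \\ 1 & 1 & 1 \\ 1 & 1 & 0 \end{psmallmatrix} \otimes J_{n/3}$ has nonzero eigenvalues $\tfrac{n}{3}(1\pm\sqrt{3})$ and spread exactly $2n/\sqrt{3}$ is also correct.

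The gap is that everything that makes the theorem hard is asserted rather than proved. First, the claim that stationarity forces a maximizer of $G$ to use only boundedly many distinct values $z_k$ is not established: the objective is nondifferentiable on the sets $\mathrm{Re}(z_iz_j)=0$, the KKT relation only says that $\sum_{j \in N(i)} \overline{z_j}$ equals a fixed linear function of $z_i$, which does not by itself collapse points with equal ``neighborhoods,'' and the promised ``convexity/rearrangement argument in the radii'' is never given; in \cite{Breen22} this step is the graph-limit argument showing the extremal graphon is a stepgraphon with a bounded number of steps, and it is a substantial part of the work. Second, the resulting finite-dimensional optimization --- proving that the two-direction configuration with multiplicity ratio $2\!:\!1$ is the \emph{unique} maximizer, which carries the inequality, the equality characterization, and item (1) all at once --- is precisely what \cite{Breen22} had to solve, partly with computer-assisted interval arithmetic; you simply assume its answer. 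Third, item (2) cannot be obtained by ``rerunning the same scheme with the diagonal constrained to zero'': that is the original Gregory--Hershkowitz--Kirkland conjecture, its extremizer is only known for $n$ sufficiently large, and in \cite{Breen22} it required a separate, much longer stability argument layered on top of the unconstrained solution. Finally, the $0$--$1$ conclusion in item (1) needs an argument for entries with $u_iu_j - v_iv_j = 0$, where your sign-pattern mechanism says nothing. In short, what you have is a correct reduction plus a verification of the extremal example; the core of the theorem remains unproved.
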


In the same paper, the authors detailed a number of open problems, beginning with the following: 
\begin{quote}
    ``What digraph of order $n$ maximizes the spread of its adjacency matrix ... Is this
more general problem also maximized by the same set of graphs as in the undirected
case?" \cite[Sec. 8]{Breen22}
\end{quote}

While many refinements and related results  have been produced recently \cite{brooks2024maximum,gotshall2022spread,li2022maximum,liu2024graph,urschel2021graphs}, the above question remains open. Here, we answer this question in the affirmative, generalizing Theorem \ref{thm:spread} to all non-negative matrices:

\begin{theorem}[Directed Spread Theorem]\label{thm:main}
Let $A$ be an $n \times n$ non-negative matrix. Then
\[ \mathrm{diam}\big(\Lambda(A)\big) \le \frac{2 \,n}{\sqrt{3}}  \, \|A\|_{\max},\]
with equality if and only if $n = 0 \mod 3$ and $A = P_{\pi} \left(\begin{psmallmatrix} 1 & 1 & 1 \\ 1 & 1 & 1 \\ 1 & 1 & 0  \end{psmallmatrix} \otimes J_{\frac{n}{3}}\right)P_{\pi}$, where $J_k$ is the $k \times k$ all-ones matrix and $P_{\pi}$ is a permutation matrix. Furthermore, \begin{enumerate}
    \item the matrix $A$ that maximizes $\mathrm{diam}\big(\Lambda(A)\big) / \|A\|_{\max}$ over all $n \times n$ non-negative matrices is symmetric for all $n$, and
    \item the matrix $A$ that maximizes $\mathrm{diam}\big(\Lambda(A)\big)/\|A\|_{\max}$ over all $n \times n$ zero-diagonal non-negative matrices is exactly the adjacency matrix of the join of a clique on $\lfloor 2n/3 \rfloor$ vertices and an independent set on $\lceil n/3 \rceil$ vertices for all $n$ sufficiently large.
\end{enumerate} \end{theorem}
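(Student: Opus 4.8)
The plan is to bound the diameter of the numerical range $W(A)=\{x^{*}Ax:\|x\|=1\}$, which by the Toeplitz--Hausdorff theorem is convex and contains $\Lambda(A)$; hence $\mathrm{diam}(\Lambda(A))\le\mathrm{diam}(W(A))$, and it suffices to prove $\mathrm{diam}(W(A))\le\frac{2n}{\sqrt3}\|A\|_{\max}$ with the stated equality cases, since the extremal symmetric matrix of Theorem \ref{thm:spread} already attains $\mathrm{diam}(W(A))=\lambda_{1}-\lambda_{n}=\frac{2n}{\sqrt3}$. Normalizing $\|A\|_{\max}=1$, writing $S=\tfrac12(A+A^{\top})$ and $N=\tfrac12(A-A^{\top})$ for the symmetric and antisymmetric parts, and letting $H_\theta=\cos\theta\,S-i\sin\theta\,N$ be the Hermitian part of $e^{-i\theta}A$, the diameter of the convex set $W(A)$ equals its largest width over all directions, so that
\[ \mathrm{diam}(W(A))=\max_{\theta}\big(\lambda_{\max}(H_\theta)-\lambda_{\min}(H_\theta)\big). \]
The whole problem thus reduces to bounding the spread of the complex Hermitian matrices $H_\theta$ uniformly in $\theta$.

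Next I would pass to a real symmetric problem so that Theorem \ref{thm:spread} can be applied. The realification $\tilde H_\theta=\begin{psmallmatrix}\cos\theta\,S & \sin\theta\,N\\ -\sin\theta\,N & \cos\theta\,S\end{psmallmatrix}$ is a $2n\times2n$ real symmetric matrix whose spectrum is that of $H_\theta$ with every eigenvalue doubled, so $\lambda_{\max}(H_\theta)-\lambda_{\min}(H_\theta)=\lambda_{1}(\tilde H_\theta)-\lambda_{2n}(\tilde H_\theta)$. Reducing to $\cos\theta\ge0$ (so the diagonal blocks $\cos\theta\,S$ are entrywise non-negative) makes the entrywise-magnitude comparison spread-monotone, so replacing $\sin\theta\,N$ by $|\sin\theta|\,|N|$ only enlarges the spread; the resulting block matrix $\begin{psmallmatrix}P&Q\\Q&P\end{psmallmatrix}$ with $P=\cos\theta\,S\ge0$ and $Q=|\sin\theta|\,|N|\ge0$ is orthogonally similar to $(P+Q)\oplus(P-Q)$. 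This returns us to the $n\times n$ symmetric non-negative matrices $P\pm Q=\cos\theta\,S\pm|\sin\theta|\,|N|$, whose entries satisfy $\|P\pm Q\|_{\max}\le1$ thanks to the joint constraint $0\le S_{ij}\pm N_{ij}\le1$ inherited from $A\in[0,1]^{n\times n}$.

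The main obstacle is the final estimate
\[ \lambda_{\max}(P+Q)-\min\{\lambda_{\min}(P+Q),\lambda_{\min}(P-Q)\}\le\tfrac{2n}{\sqrt3}. \]
Applying Theorem \ref{thm:spread} to $P+Q$ and to $P-Q$ separately controls each of $\lambda_{\max}(P\pm Q)-\lambda_{\min}(P\pm Q)$, but the genuinely directed contribution lives in the cross term $\lambda_{\max}(P+Q)-\lambda_{\min}(P-Q)$, where the Perron eigenvector of $P+Q$ and the minimal eigenvector of $P-Q$ differ. Decoupling the symmetric and antisymmetric parts here is exactly too lossy: the cyclic permutation matrix already shows that $\mathrm{diam}(W(A))$ can strictly exceed the spread of $S$, so no naive symmetrization suffices. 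The heart of the proof is therefore a correlated bound on this cross term that tracks both extremal eigenvectors simultaneously, which I expect to require a case analysis in $\theta$ — the nearly symmetric range $\theta\approx0$, where $Q$ is a controlled perturbation, versus the range $\cos\theta\le\tfrac12$, where $\tilde H_\theta$ already has $\|\cdot\|_{\max}\le\tfrac12$ so that Theorem \ref{thm:spread} applies directly to the $2n\times2n$ matrix, the dimension doubling being compensated exactly by the halved entry bound.

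Finally, chasing equality through this chain forces $Q=0$, i.e.\ $N=0$, so every extremizer is symmetric; Theorem \ref{thm:spread} then identifies it as $P_{\pi}\big(\begin{psmallmatrix}1&1&1\\1&1&1\\1&1&0\end{psmallmatrix}\otimes J_{n/3}\big)P_{\pi}$ and yields both the value $\tfrac{2n}{\sqrt3}$ and conclusion (1). For the zero-diagonal statement (2), the same reduction applies verbatim to matrices with $A_{ii}=0$ (which forces $S_{ii}=0$ and leaves $N_{ii}=0$), and invoking part (2) of Theorem \ref{thm:spread} in place of the general equality case identifies the maximizer with the stated join of a clique on $\lfloor 2n/3\rfloor$ vertices and an independent set on $\lceil n/3\rceil$ vertices for all $n$ sufficiently large.
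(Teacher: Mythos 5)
Your proposal is not a proof but a program, and its two load-bearing steps fail: one is false as stated, and the other --- which you yourself call ``the heart of the proof'' --- is never carried out. Consider the symmetrization step: the claim that, once the diagonal blocks are non-negative, replacing the off-diagonal blocks $\pm\sin\theta\,N$ by $|\sin\theta|\,|N|$ ``only enlarges the spread.'' Spread is \emph{not} monotone under entrywise-magnitude comparison, even for matrices of exactly your form $\begin{psmallmatrix} P & R \\ -R & P \end{psmallmatrix}$ with $P \ge 0$ symmetric and $R$ antisymmetric. Take $k \ge 3$, let $P = J_k \oplus \tfrac12 (J_3 - I_3)$, and let $R$ vanish outside the last three coordinates, where it equals $\tfrac{\sqrt3}{2}\begin{psmallmatrix} 0 & -1 & 1 \\ 1 & 0 & -1 \\ -1 & 1 & 0 \end{psmallmatrix}$. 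The associated Hermitian matrix $H = P - iR$ is $J_k \oplus T$, where $T$ is the circulant with first row $(0, e^{i\pi/3}, e^{-i\pi/3})$ and eigenvalues $\{1,1,-2\}$, so the spread of $\begin{psmallmatrix} P & R \\ -R & P\end{psmallmatrix}$ equals $k+2$. Meanwhile, with $Q = |R|$ one has $P \pm Q = J_k \oplus \tfrac{1\pm\sqrt3}{2}(J_3 - I_3)$, hence
\begin{equation*}
\lambda_{\max}(P+Q) - \min\bigl\{\lambda_{\min}(P+Q),\, \lambda_{\min}(P-Q)\bigr\} \;=\; k + \tfrac{1+\sqrt3}{2} \;<\; k+2 ,
\end{equation*}
so taking entrywise absolute values \emph{strictly decreases} the spread. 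The one-sided half of your claim is fine (Perron--Frobenius gives $\lambda_{\max}(H) \le \lambda_{\max}(P+Q)$), but the bottom eigenvalue of a Hermitian matrix with genuinely complex phases can lie strictly below that of both real signings $P\pm Q$, and by enlarging $\lambda_{\max}$ through the $J_k$ block this kills the spread inequality itself. Any correct version of this lemma must use the coupling $|N_{ij}| \le \min\{S_{ij}, 1-S_{ij}\}$ inherited from $A \in [0,1]^{n\times n}$ --- which your argument invokes only for the bound $\|P\pm Q\|_{\max}\le 1$, never in the monotonicity step --- and proving such a coupled bound is essentially the original problem, not a reduction of it.

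The remaining steps do not rescue this. The cross term $\lambda_{\max}(P+Q)-\lambda_{\min}(P-Q)$ is exactly where you stop: you state that a ``correlated bound\dots which I expect to require a case analysis in $\theta$'' is needed, and no such bound is proved. Your fallback in the regime $\cos\theta \le \tfrac12$ is also incorrect: Theorem \ref{thm:spread} cannot be applied ``directly to the $2n\times 2n$ matrix $\tilde H_\theta$,'' because $\tilde H_\theta$ has negative entries and Theorem \ref{thm:spread} requires non-negativity; a $2n\times 2n$ symmetric matrix with entries in $[-\tfrac12,\tfrac12]$ can have spread $n\sqrt2 > 2n/\sqrt3$, e.g.\ $\tfrac12 \begin{psmallmatrix} 1 & 1 \\ 1 & -1\end{psmallmatrix}\otimes J_n$. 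Likewise $P - Q$ need not be non-negative once $|\sin\theta| > \cos\theta$, so Theorem \ref{thm:spread} does not apply to it either. Finally, note that your route commits you to the strictly stronger inequality $\mathrm{diam}(W(A)) \le \tfrac{2n}{\sqrt3}\|A\|_{\max}$, which does not follow from Theorem \ref{thm:main} (the numerical range generally strictly contains the convex hull of the spectrum) and whose truth you never establish. The paper's proof avoids the numerical range entirely: it bounds the modulus and real part of non-real eigenvalues through the Frobenius norm and the trace of $A^2$ (Lemma \ref{lm:bounds}, Corollary \ref{co:bounds}), uses Bendixson's inequality together with Theorem \ref{thm:spread} to rule out extremal pairs that are both real or both non-real, and settles the remaining small cases, notably $n=4$, by a rounding and Schur-perturbation argument.
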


In contrast to Breen, Riasanovsky, Tait, and Urschel's proof of Theorem \ref{thm:spread} (which involves nearly $70$ pages of complicated mathematics and computation), the proof of Theorem \ref{thm:main} (given Theorem \ref{thm:spread}) is incredibly simple and intuitive. The main idea is that, while the use of the complex plane allows for larger distances in theory, complex eigenvalues come in pairs, making them slightly too ``expensive" for maximizing the spread of eigenvalues.

The key ingredient in our proof is the following lemma regarding the maximum magnitude and real part of non-real eigenvalues of non-negative matrices, which may be of independent interest.

\begin{lemma}\label{lm:bounds}
Let $A$ be an $n \times n$ non-negative matrix with $\|A\|_{\max} \le 1$. Any eigenvalue $\mu$ of $A$ with $\mathrm{Im}(\mu) \neq 0$ satisfies
\begin{equation}\label{ineq:|mu|}
|\mu|^2 \le \min \left\{\frac{n^2+n\lambda_{\max}-2\lambda_{\max}^2- \gamma }{4}, \; \lambda_{\max}^2 \right\}
\end{equation}
and
\begin{equation}\label{ineq:re(mu)}
\mathrm{Re}(\mu)^2 \le \min \left\{\frac{n^2+3n\lambda_{\max}-4\lambda_{\max}^2- \gamma }{8}, \; \lambda_{\max}^2 \right\},
\end{equation}
where $\lambda_{\max}$ is the Perron eigenvalue of $A$ and 
\[\gamma := n\lambda_{\max} - \sum_{i,j = 1}^n \min\{A_{ij},A_{ji}\} + \sum_{i,j=1}^n \left(1+\min\{A_{ij},A_{ji}\}-A_{ij}^2-A_{ji}^2 \right) \ge 0.\]
\end{lemma}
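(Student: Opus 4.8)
The plan is to prove the two ``hard'' bounds, namely the first entries of each minimum; the second entries are immediate, since by Perron--Frobenius the spectral radius of $A$ equals $\lambda_{\max}$, so $|\mu| \le \lambda_{\max}$ gives both $|\mu|^2 \le \lambda_{\max}^2$ and $\mathrm{Re}(\mu)^2 \le |\mu|^2 \le \lambda_{\max}^2$. First I would simplify $\gamma$: the two sums involving $\min\{A_{ij},A_{ji}\}$ cancel, leaving $\gamma = n\lambda_{\max} + n^2 - 2\sum_{i,j}A_{ij}^2$. Substituting this into \eqref{ineq:|mu|} and \eqref{ineq:re(mu)} collapses the targets to the clean statements $|\mu|^2 \le \tfrac12\big(\sum_{i,j}A_{ij}^2 - \lambda_{\max}^2\big)$ and $\mathrm{Re}(\mu)^2 \le \tfrac14\big(\sum_{i,j}A_{ij}^2 + n\lambda_{\max} - 2\lambda_{\max}^2\big)$, where $\sum_{i,j}A_{ij}^2$ is the squared Frobenius norm of $A$.

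The proof rests on three ingredients. The first is Schur's inequality $\sum_{i=1}^n |\lambda_i|^2 \le \sum_{i,j} A_{ij}^2$, where $\lambda_1,\dots,\lambda_n$ are the eigenvalues of $A$. The second is the trace identity $\sum_{i=1}^n \lambda_i^2 = \Tr(A^2)$, which is real, so that $\sum_i \mathrm{Re}(\lambda_i^2) = \Tr(A^2)$. The third is the bound $\Tr(A^2) \le n\lambda_{\max}$: since every entry lies in $[0,1]$ we have $A_{ij}A_{ji} \le \min\{A_{ij},A_{ji}\}$, so $\Tr(A^2) = \sum_{i,j} A_{ij}A_{ji} \le \sum_{i,j}\min\{A_{ij},A_{ji}\}$; the symmetric nonnegative matrix $B$ with $B_{ij} = \min\{A_{ij},A_{ji}\}$ satisfies $B \le A$ entrywise, hence $\rho(B) \le \rho(A) = \lambda_{\max}$ by monotonicity of the Perron root, and $\sum_{i,j}B_{ij} = \mathbf 1^{\top} B \mathbf 1 \le n\,\lambda_{\max}(B) \le n\lambda_{\max}$ by the Rayleigh quotient.

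Bound \eqref{ineq:|mu|} then follows at once: since $\lambda_{\max}$, $\mu$, and $\bar\mu$ are three distinct eigenvalues (the latter two because $\mathrm{Im}(\mu) \neq 0$, and both differ from the real $\lambda_{\max}$), Schur gives $\lambda_{\max}^2 + 2|\mu|^2 \le \sum_i |\lambda_i|^2 \le \sum_{i,j}A_{ij}^2$. The step I expect to be the crux is bound \eqref{ineq:re(mu)}: I would \emph{add} Schur's inequality to the real trace identity. Writing $\lambda_i = x_i + i y_i$, the summands combine as $|\lambda_i|^2 + \mathrm{Re}(\lambda_i^2) = (x_i^2+y_i^2) + (x_i^2 - y_i^2) = 2\,\mathrm{Re}(\lambda_i)^2 \ge 0$, so the imaginary parts cancel and every term is nonnegative. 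Keeping only the contributions of $\lambda_{\max},\mu,\bar\mu$ yields $\lambda_{\max}^2 + 2\,\mathrm{Re}(\mu)^2 \le \tfrac12\big(\sum_{i,j}A_{ij}^2 + \Tr(A^2)\big) \le \tfrac12\big(\sum_{i,j}A_{ij}^2 + n\lambda_{\max}\big)$, which rearranges to exactly the claimed bound.

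The decisive idea is this cancellation: individually $\sum_i |\lambda_i|^2$ and $\sum_i \mathrm{Re}(\lambda_i^2)$ are not sums of nonnegative terms adapted to isolating a single eigenvalue, but their sum is, which is precisely what lets me discard the $n-3$ eigenvalues other than $\lambda_{\max}$ and the conjugate pair $\mu,\bar\mu$. The remaining, routine, tasks are the entrywise/Perron argument for $\Tr(A^2)\le n\lambda_{\max}$ and verifying $\gamma \ge 0$ directly from the stated decomposition: each summand $1+\min\{A_{ij},A_{ji}\}-A_{ij}^2-A_{ji}^2$ is nonnegative because $A_{ij},A_{ji}\in[0,1]$, and $n\lambda_{\max} - \sum_{i,j}\min\{A_{ij},A_{ji}\}\ge 0$ by the same monotonicity bound used above.
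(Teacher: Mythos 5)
Your proof is correct and follows essentially the same route as the paper's: both rest on the identity $\|A\|_F^2 = \tfrac12\left(n^2+n\lambda_{\max}-\gamma\right)$, Schur's inequality $\sum_i|\lambda_i|^2 \le \|A\|_F^2$, and the bound $\Tr(A^2) \le \sum_{i,j}\min\{A_{ij},A_{ji}\} \le n\lambda_{\max}$, with Perron--Frobenius handling the $\lambda_{\max}^2$ halves of each minimum. The only difference is presentational: the paper first bounds the squared moduli of the discarded eigenvalues and then lower-bounds $\Tr(A^2)$ via $\mathrm{Re}(\lambda^2)\ge -|\lambda|^2$, which is exactly your termwise cancellation $|\lambda|^2+\mathrm{Re}(\lambda^2)=2\,\mathrm{Re}(\lambda)^2\ge 0$ applied in a different order.
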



\begin{proof}[Proof of Lemma \ref{lm:bounds}]
First, we note that, by Perron-Frobenius, $\mathrm{Re}(\mu)^2 \le |\mu|^2 \le \lambda_{\max}^2$, immediately handling half of Inequalities \eqref{ineq:|mu|} and \eqref{ineq:re(mu)}.

Next, we verify that $\gamma \ge 0$ and prove Inequality \eqref{ineq:|mu|}. Consider the matrix $\hat{A} \in \mathbb{R}^{n \times n}$ with $\hat{A}_{ij} = \min \{A_{ij}, A_{ji}\}$ for all $i,j \in \{1,\ldots,n\}$. We may express the squared Frobenius norm $\|A\|_F^2 = \sum_{i,j=1}^n A_{ij}^2$ in terms of $n$, $\lambda_{\max}$, and $\gamma$:
\begin{align}
\sum_{i,j=1}^n A_{ij}^2  &= \frac{1}{2} \left(n^2+n\lambda_{\max} - \sum_{i,j=1}^n (1+\min\{A_{ij}, A_{ji}\} - A_{ij}^2 - A_{ji}^2) - (n\lambda_{\max}-\bm{1}^T \Hat{A} \bm{1}) \right) \nonumber \\
&= \frac{1}{2}(n^2 + n\lambda_{\max} - \gamma). \label{ineq:frobenius}
\end{align}
The quantity $\gamma$ is the sum of the two addends $n\lambda_{\max}-\bm{1}^T \Hat{A} \bm{1}$ and $\sum_{i,j=1}^n (1+\min\{A_{ij}, A_{ji}\} - A_{ij}^2 - A_{ji}^2)$. The former is non-negative by the non-decreasing property of the spectral radius \cite[Corollary 2.1]{Minc88} and the Courant-Fischer theorem:
\begin{equation}\label{ineq:hA_lmax}\lambda_{\max}(A) \ge \lambda_{\max}(\hat{A}) \ge \frac{\textbf{1}^T \hat{A} \textbf{1}}{\textbf{1}^T \textbf{1}} = \frac{\textbf{1}^T \hat{A} \textbf{1}}{n},
\end{equation}
and the latter is non-negative because $\|A\|_{\max} \le 1$. Let $\tilde \Lambda(A)$ denote the multiset of eigenvalues of $A$ with one copy of $\lambda_{\max}$, $\mu$, and $\overline{\mu}$ removed. The lower bound 
\[\|A\|_F^2 \ge \sum_{i=1}^n |\lambda_i|^2 = \lambda_{\max}^2 + 2|\mu|^2 + \sum_{\lambda \in \tilde \Lambda(A)} |\lambda|^2\]
combined with Equation \eqref{ineq:frobenius} produces the remainder of Inequality \eqref{ineq:|mu|}:
\[|\mu|^2 \le \frac{\|A\|_F^2 - \lambda_{\max}^2}{2} \le \frac{n^2+n\lambda_{\max}-2\lambda_{\max}^2-\gamma}{4}.\]
Furthermore, we may also bound the sum of the squared moduli of the remaining eigenvalues:
\begin{align}
\sum_{\lambda \in \tilde \Lambda(A) } |\lambda|^2 &\le \|A\|_F^2 - \lambda_{\max}^2 - 2|\mu|^2 \nonumber \\
&\le \frac{n^2+n\lambda_{\max}-\gamma}{2} - \lambda_{\max}^2 - 2|\mu|^2 \nonumber \\
&= \frac{n^2+n\lambda_{\max}-2\lambda_{\max}^2-\gamma}{2} - 2|\mu|^2.\label{ineq:other-eig}
\end{align}

Finally, using the trace of $A^2$, we prove the remainder of Inequality \eqref{ineq:re(mu)}. We have, by Inequality \eqref{ineq:hA_lmax}, 
\[\mathrm{trace}(A^2) = \sum_{i,j=1}^n A_{ij} A_{ji} \le \sum_{i,j=1}^n \min\{A_{ij}, A_{ji}\} \le n\lambda_{\max}.\]
On the other hand, the trace of $A^2$ can also be written in terms of eigenvalues, giving the lower bound
\begin{align*}
\mathrm{trace}(A^2) &= \lambda_{\max}^2+\mu^2+\overline{\mu}^2+\sum_{\lambda \in \tilde \Lambda(A)} \lambda^2 \\
&\ge \lambda_{\max}^2+\mu^2+\overline{\mu}^2-\sum_{\lambda \in \tilde \Lambda(A)} |\lambda|^2 \\
&\ge \lambda_{\max}^2+\mu^2+\overline{\mu}^2+2|\mu|^2-\frac{n^2+n\lambda_{\max}-2\lambda_{\max}^2-\gamma}{2} \qquad \text{by Inequality \eqref{ineq:other-eig}} \\
&=4\mathrm{Re}(\mu)^2 - \frac{n^2+n\lambda_{\max}-4\lambda_{\max}^2-\gamma}{2}.
\end{align*}
Combining the lower and upper bounds for $\mathrm{trace}(A^2)$ yields our desired result
\begin{align*}
&4\mathrm{Re}(\mu)^2 - \frac{n^2+n\lambda_{\max}-4\lambda_{\max}^2-\gamma}{2} \le n \lambda_{\max} \\
\implies &4\mathrm{Re}(\mu)^2 \le n \lambda_{\max} + \frac{n^2+n\lambda_{\max}-4\lambda_{\max}^2-\gamma}{2} \\
\implies  & \mathrm{Re}(\mu)^2 \le \frac{n^2+3n\lambda_{\max}-4\lambda_{\max}^2-\gamma}{8}.
\end{align*}
\end{proof}

Lemma \ref{lm:bounds} immediately implies that for any eigenvalue $\mu$ with $\mathrm{Im}(\mu) \neq 0$, its modulus and real part are bounded by constant multiples of $n$.

\begin{corollary}\label{co:bounds}
Let $A$ be an $n \times n$ non-negative matrix with $\|A\|_{\max} \le 1$. Any eigenvalue $\mu$ of $A$ with $\mathrm{Im}(\mu) \neq 0$ satisfies
$$|\mu| \le \frac{n}{2} \qquad \text{and} \qquad |\mathrm{Re}(\mu)| \le \frac{3+\sqrt{57}}{24}n. $$
\end{corollary}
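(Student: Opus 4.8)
The plan is to treat both displayed bounds in Lemma \ref{lm:bounds} as functions of the single real parameter $\lambda_{\max}$, discard the non-negative quantity $\gamma$ (which only weakens the bounds), and then maximize the resulting pointwise minimum over the feasible range of $\lambda_{\max}$. Writing $\lambda := \lambda_{\max}$ for brevity, the feasible range is $\lambda \in [0, n]$: indeed $\lambda \ge 0$ by Perron--Frobenius, and $\lambda \le n$ because the spectral radius of a non-negative matrix is at most its largest row sum, which is at most $n$ when $\|A\|_{\max} \le 1$. Thus it suffices to bound $\max_{\lambda \in [0,n]} \min\{\cdot,\cdot\}$ for each of the two pairs of functions appearing in Inequalities \ref{ineq:|mu|} and \ref{ineq:re(mu)}.

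For the modulus bound I would set $f(\lambda) = (n^2 + n\lambda - 2\lambda^2)/4$ and $g(\lambda) = \lambda^2$, and observe that these cross at $\lambda = n/2$, where both equal $n^2/4$. A one-line computation shows $f(\lambda) \le n^2/4$ exactly when $\lambda(n - 2\lambda) \le 0$, i.e. when $\lambda \ge n/2$, while $g(\lambda) \le n^2/4$ exactly when $\lambda \le n/2$. Hence for every $\lambda \in [0,n]$ at least one of the two bounds is at most $n^2/4$, so $|\mu|^2 \le \min\{f(\lambda), g(\lambda)\} \le n^2/4$, giving $|\mu| \le n/2$.

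For the real-part bound I would set $h(\lambda) = (n^2 + 3n\lambda - 4\lambda^2)/8$ and again $g(\lambda) = \lambda^2$. Solving $h(\lambda) = g(\lambda)$ reduces to $12\lambda^2 - 3n\lambda - n^2 = 0$, whose positive root is $\lambda^* = (3 + \sqrt{57})\,n/24$; at this value the common value of $h$ and $g$ is $(\lambda^*)^2 = \big((3+\sqrt{57})/24\big)^2 n^2$. It then remains only to confirm that $\lambda^*$ is where the pointwise minimum $\min\{h, g\}$ attains its maximum on $[0,n]$, which would yield $\mathrm{Re}(\mu)^2 \le \big((3+\sqrt{57})/24\big)^2 n^2$ and hence the claimed bound on $|\mathrm{Re}(\mu)|$.

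The only point requiring a moment's care --- and the closest thing here to an obstacle, though it is really just bookkeeping --- is verifying that this crossing does give the maximum of $\min\{h,g\}$. Since $g$ is increasing while $h$ is a downward parabola with vertex at $\lambda = 3n/8 < \lambda^*$, on $[0,\lambda^*]$ the minimum equals the increasing function $g$, whereas on $[\lambda^*, n]$ it equals $h$, which is already past its vertex and therefore decreasing. Thus $\min\{h,g\}$ rises up to $\lambda^*$ and falls thereafter, so its maximum is $g(\lambda^*)$. One should also note $\lambda^* < n$, which is immediate since $(3+\sqrt{57})/24 \approx 0.44$, so the optimizer lies in range and the bound $|\mathrm{Re}(\mu)| \le (3+\sqrt{57})\,n/24$ follows.
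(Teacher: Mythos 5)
Your proposal is correct and is essentially the paper's own argument: the paper likewise drops $\gamma \ge 0$, sets $x = \lambda_{\max}/n \in [0,1]$, and maximizes the pointwise minimum of each pair of bounds from Lemma \ref{lm:bounds}, obtaining $1/4$ at $x = 1/2$ and $\bigl((3+\sqrt{57})/24\bigr)^2$ at $x = (3+\sqrt{57})/24$. The only difference is that you spell out the calculus verification (crossing points, monotonicity of each branch) that the paper leaves as an observation.
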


\begin{proof}
The desired result follows immediately from Lemma \ref{lm:bounds} and the observations that
\[ \max_{x \in [0,1]} \min \left\{ \frac{1+x-2x^2}{4},x^2 \right\} = \frac{1}{4} \qquad \text{and} \qquad \max_{x \in [0,1]} \min \left\{ \frac{1+3x-4x^2}{8}, x^2 \right\} = \left(\frac{3 + \sqrt{57}}{24}\right)^2,\]
achieved at $x = 1/2$ and $x = (3+\sqrt{57})/24$, respectively.


\end{proof}

For the proof of Theorem \ref{thm:main}, we also require the following four lemmas.

\begin{lemma}[Bendixson's inequality, {\cite[Theorem II]{Bendixson}}]\label{lm:symmetric-part-bound}
Let $A \in \mathbb{R}^{n \times n}$. Any eigenvalue $\lambda$ of $A$ satisfies
$ \lambda_{\min}\left(\frac{A+A^T}{2}\right) \le \mathrm{Re}(\lambda) \le \lambda_{\max}\left(\frac{A+A^T}{2}\right)$.
\end{lemma}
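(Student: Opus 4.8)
The plan is to prove Bendixson's inequality via a Rayleigh-quotient argument applied to the symmetric part $S := (A+A^T)/2$. First I would fix an eigenvalue $\lambda$ of $A$ and choose a corresponding unit eigenvector $v \in \mathbb{C}^n$ (complex in general, even though $A$ is real), so that $Av = \lambda v$ and $v^* v = 1$. The basic observation driving the proof is that the real part of $\lambda$ can be recovered as a Rayleigh quotient of the symmetric matrix $S$.

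To make this precise, I would compute $v^* A v = \lambda \, v^* v = \lambda$. Since $A$ is real, its conjugate transpose satisfies $A^* = A^T$, so taking the conjugate transpose of the scalar $v^* A v$ gives $v^* A^T v = \overline{v^* A v} = \overline{\lambda}$. Adding these two identities and dividing by two yields
$$v^* S v = \frac{v^* A v + v^* A^T v}{2} = \frac{\lambda + \overline{\lambda}}{2} = \mathrm{Re}(\lambda).$$

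Finally, since $S$ is real symmetric (hence Hermitian), the quantity $v^* S v$ is real for every complex vector $v$, and the Courant-Fischer characterization of the extreme eigenvalues of a Hermitian matrix bounds this Rayleigh quotient between $\lambda_{\min}(S)$ and $\lambda_{\max}(S)$ whenever $v^* v = 1$. Combining this with the identity above immediately gives $\lambda_{\min}(S) \le \mathrm{Re}(\lambda) \le \lambda_{\max}(S)$, as desired. I do not anticipate a genuine obstacle here; the only point requiring care is that the eigenvector $v$ is complex, so the algebraic manipulations must be carried out with the conjugate transpose rather than the ordinary transpose, and one must invoke that $v^* S v$ is automatically real precisely because $S$ is symmetric.
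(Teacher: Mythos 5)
Your proof is correct. Note that the paper does not prove this lemma at all---it is quoted as a classical result of Bendixson with a citation to the original 1902 paper---so there is no in-paper argument to compare against; your Rayleigh-quotient argument is the standard proof of this classical inequality, and it is complete as written. The two points you flag (working with the conjugate transpose $v^*$ rather than $v^T$, and the fact that $v^* S v$ is real and bounded by $\lambda_{\min}(S)$ and $\lambda_{\max}(S)$ because $S$ is Hermitian) are exactly the places where care is needed, and you handle both correctly.
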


\begin{lemma}\label{lm:real-part}
Let $A$ be an $n \times n$ non-negative matrix with $\|A\|_{\max} \le 1$. Then the real part of every eigenvalue of $A$ is at least $ - n /2$.
\end{lemma}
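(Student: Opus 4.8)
The plan is to combine Bendixson's inequality (Lemma \ref{lm:symmetric-part-bound}) with a direct variational estimate on the symmetric part of $A$. Writing $S = (A+A^T)/2$, Lemma \ref{lm:symmetric-part-bound} gives $\mathrm{Re}(\lambda) \ge \lambda_{\min}(S)$ for every eigenvalue $\lambda$ of $A$, so it suffices to prove $\lambda_{\min}(S) \ge -n/2$. Here $S$ is symmetric, non-negative, and satisfies $\|S\|_{\max} \le 1$, since each entry $S_{ij} = (A_{ij}+A_{ji})/2$ is an average of two numbers in $[0,1]$. Note that this reduction is exactly what makes the problem tractable: we only need the elementary structure of symmetric non-negative matrices with bounded entries, not the full force of Theorem \ref{thm:spread}.

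Next I would pass to the Rayleigh quotient $\lambda_{\min}(S) = \min_{\|v\|_2 = 1} v^T S v$ and establish the lower bound $v^T S v \ge -n/2$ for \emph{every} unit vector $v$. Fix such a $v$ and set $P = \{i : v_i > 0\}$ and $N = \{i : v_i < 0\}$. The diagonal entries $S_{ii} \ge 0$ multiply the non-negative quantities $v_i^2$; for off-diagonal pairs, $S_{ij} v_i v_j \ge 0$ whenever $v_i v_j \ge 0$, while $S_{ij} v_i v_j \ge v_i v_j$ whenever $v_i v_j < 0$ (as $S_{ij} \in [0,1]$ makes a negative product less negative). Discarding all the non-negative contributions yields
\[ v^T S v \;\ge\; \sum_{\substack{i \ne j \\ v_i v_j < 0}} v_i v_j \;=\; 2 \Big(\sum_{i \in P} v_i\Big)\Big(\sum_{j \in N} v_j\Big). \]

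To close, write $a = \sum_{i \in P} v_i \ge 0$ and $b = \sum_{j \in N} v_j \le 0$, and set $p = \sum_{i \in P} v_i^2$, so that $\sum_{j \in N} v_j^2 = 1 - p$. Cauchy--Schwarz gives $a^2 \le |P|\,p$ and $b^2 \le |N|(1-p)$, whence
\[ v^T S v \;\ge\; 2ab \;\ge\; -2\sqrt{|P|\,|N|\; p(1-p)}. \]
Since $|P| + |N| \le n$, AM--GM gives $|P|\,|N| \le n^2/4$, and $p(1-p) \le 1/4$, so that $v^T S v \ge -2 \cdot (n/2) \cdot (1/2) = -n/2$. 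As this holds for every unit $v$, we conclude $\lambda_{\min}(S) \ge -n/2$, and hence $\mathrm{Re}(\lambda) \ge -n/2$ for all eigenvalues $\lambda$ of $A$.

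The argument is short and self-contained, so there is no serious obstacle; the one point demanding care is the sign bookkeeping in the second step. Specifically, one must verify that the entrywise replacement of $S$ by its worst-case values $0$ and $1$ is legitimate for the \emph{fixed} vector $v$ (so the resulting bound applies to the actual matrix $S$, without any appeal to the symmetry or optimality of a minimizing matrix). Once that is checked, the Cauchy--Schwarz and AM--GM steps are routine.
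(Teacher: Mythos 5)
Your proof is correct, and its skeleton is the same as the paper's: both arguments invoke Bendixson's inequality (Lemma \ref{lm:symmetric-part-bound}) to reduce the claim to the bound $\lambda_{\min}(S) \ge -n/2$ for the symmetric part $S = (A+A^T)/2$, which is symmetric, non-negative, and entrywise at most $1$. The difference is in how that bound is handled: the paper simply cites it as a known result \cite[Proposition 3]{Constantine85}, whereas you prove it inline via the Rayleigh quotient --- discarding the non-negative terms $S_{ij}v_iv_j$ with $v_iv_j \ge 0$, replacing $S_{ij}$ by $1$ on the mixed-sign terms, and then closing with Cauchy--Schwarz ($a^2 \le |P|p$, $b^2 \le |N|(1-p)$) and the two elementary maximizations $|P||N| \le n^2/4$ and $p(1-p) \le 1/4$. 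All of these steps check out, including the subtle points: the identity $\sum_{j \in N} v_j^2 = 1-p$ holds because zero coordinates contribute nothing, and the entrywise replacement is legitimate for a fixed $v$ since each term is bounded separately. In effect you have reconstructed a proof of Constantine's proposition. What each route buys: the paper's version is a one-line citation; yours is self-contained and shows that the constant $-n/2$ comes from nothing deeper than balancing the positive and negative supports of the test vector, which also makes transparent why the extremal configuration is a bipartition into two blocks of size $n/2$ with $v$ constant on each.
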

\begin{proof}
This follows immediately from the standard bound $\lambda_{\min}(B) \ge - n/2$ for $n \times n$ symmetric non-negative matrices with entries of magnitude at most one \cite[Proposition 3]{Constantine85} and Lemma \ref{lm:symmetric-part-bound}.
\end{proof}

\begin{lemma}\label{lm:small_spread}
The matrices $\begin{psmallmatrix} 1 & 1 \\ 1 & 0 \end{psmallmatrix}$, $\begin{psmallmatrix} 1 & 1 & 1 \\ 1 & 1 & 1 \\ 1 & 1 & 0 \end{psmallmatrix}$, $\begin{psmallmatrix} 1 & 1 & 1 & 1 \\  1 & 1 & 1 & 1 \\ 1 & 1 & 1 & 1 \\1 & 1 & 1 & 0 \end{psmallmatrix}$, and $\begin{psmallmatrix} 1 & 1 & 1 & 1 & 1 \\  1 & 1 & 1 & 1 & 1\\ 1 & 1 & 1 & 1 & 1\\1 & 1 & 1 & 0 & 0 \\ 1 & 1 & 1& 0 & 0 \end{psmallmatrix}$ have eigenvalue spread $\sqrt{5}$, $2 \sqrt{3}$, $\sqrt{21}$, and $\sqrt{33}$.
\end{lemma}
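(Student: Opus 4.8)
The plan is to treat all four matrices uniformly by recognizing a common block structure. Each listed matrix is of the form
\[ M_{a,b} = \begin{pmatrix} J_a & J_{a\times b} \\ J_{b\times a} & 0_b \end{pmatrix}, \]
the $(a+b)\times(a+b)$ symmetric $0$--$1$ matrix with an $a\times a$ all-ones block in the top left, a $b\times b$ zero block $0_b$ in the bottom right, and the $a\times b$ and $b\times a$ all-ones matrices $J_{a\times b}, J_{b\times a}$ in the two off-diagonal blocks, where $(a,b) = (1,1),\,(2,1),\,(3,1),\,(3,2)$ for the four matrices respectively. Since each $M_{a,b}$ is symmetric, its eigenvalues are real and its spread is $\lambda_1 - \lambda_n$.

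First I would observe that $M_{a,b}$ has rank exactly $2$: its column space is spanned by the all-ones vector $\mathbf{1}_{a+b}$ together with the vector equal to $1$ on the first $a$ coordinates and $0$ elsewhere. Because $M_{a,b}$ is symmetric (hence diagonalizable), this forces exactly two nonzero eigenvalues $\lambda_+ \ge \lambda_-$, with the remaining $a+b-2$ eigenvalues equal to $0$. Next I would pin down $\lambda_\pm$ via two trace identities: the trace gives $\lambda_+ + \lambda_- = \Tr(M_{a,b}) = a$, and, since $M_{a,b}$ is a symmetric $0$--$1$ matrix, $\lambda_+^2 + \lambda_-^2 = \Tr(M_{a,b}^2) = \sum_{i,j}(M_{a,b})_{ij}^2$ equals the number of ones, namely $a^2 + 2ab$. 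From these, $(\lambda_+ - \lambda_-)^2 = 2(\lambda_+^2+\lambda_-^2) - (\lambda_+ + \lambda_-)^2 = a^2 + 4ab$, while $\lambda_+ \lambda_- = \tfrac{1}{2}\big((\lambda_+ + \lambda_-)^2 - (\lambda_+^2+\lambda_-^2)\big) = -ab < 0$. The negative product confirms $\lambda_+ > 0 > \lambda_-$, so (using that $0$ is itself an eigenvalue whenever $a+b>2$) the extreme eigenvalues are precisely $\lambda_+$ and $\lambda_-$, and hence the spread equals $\lambda_+ - \lambda_- = \sqrt{a^2+4ab}$.

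Finally I would substitute the four pairs $(a,b)$: these give $\sqrt{5}$, $\sqrt{12} = 2\sqrt{3}$, $\sqrt{21}$, and $\sqrt{33}$, matching the claim. There is no serious obstacle here; the only point that genuinely requires care is verifying that the two nonzero eigenvalues straddle $0$, so that the spread is their difference rather than, say, $\lambda_+ - 0$ — and this is exactly what the sign computation $\lambda_+\lambda_- = -ab < 0$ settles. The $2\times 2$ case $(a,b)=(1,1)$ has no zero eigenvalue, but the same trace computation applies verbatim and yields $\sqrt{5}$ directly.
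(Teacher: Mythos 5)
Your proof is correct, and it takes a genuinely different route from the paper, which states the lemma without any proof at all, implicitly treating it as a routine finite verification (computing the spectrum or characteristic polynomial of each of the four small matrices separately). Your observation that every listed matrix has the block form $M_{a,b}$ with $(a,b) \in \{(1,1),(2,1),(3,1),(3,2)\}$ lets you handle all four at once: rank $2$ plus symmetry forces all but two eigenvalues to vanish, the trace identities $\lambda_+ + \lambda_- = \Tr(M_{a,b}) = a$ and $\lambda_+^2 + \lambda_-^2 = \Tr(M_{a,b}^2) = a^2 + 2ab$ determine the nonzero pair, and the sign check $\lambda_+ \lambda_- = -ab < 0$ (which you rightly flag as the one point needing care) guarantees that the spread is $\lambda_+ - \lambda_- = \sqrt{a^2 + 4ab}$ rather than $\lambda_+ - 0$. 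Beyond disposing of the four cases uniformly, your argument buys a closed-form spread for the entire family $M_{a,b}$; in particular, taking $(a,b) = (2n/3, n/3)$ recovers the extremal value $2n/\sqrt{3}$ appearing in Theorems \ref{thm:spread} and \ref{thm:main}, which makes transparent why exactly these $0$--$1$ matrices show up as the benchmark spreads in the lemma.
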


\begin{lemma}[{\cite[Theorem 7.2.3]{Golub13}}]\label{lm:perturb} Let $A,E \in \mathbb{C}^{n \times n}$, $U^* A U = D + N$ be a Schur decomposition of $A$, where $D$ is diagonal and $N$ is strictly upper triangular, and $\mu \in \Lambda(A+E)$. Then
\begin{equation*}
\min_{\lambda \in \Lambda(A)} |\lambda- \mu| \le \|E\|_2 \sum_{k=0}^{n-1} \|N\|_2^k.
\end{equation*}
\end{lemma}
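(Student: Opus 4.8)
The statement is a Bauer--Fike-type perturbation bound, and the plan is to derive it by controlling the resolvent of $A$ at $\mu$ through the Schur form $U^*AU = D+N$. If $\mu \in \Lambda(A)$ the left-hand side is zero and nothing is needed, so I would assume $\mu \notin \Lambda(A)$ and write $\delta := \min_{\lambda \in \Lambda(A)} |\lambda - \mu| > 0$. Since $\mu \in \Lambda(A+E)$, the matrix $A + E - \mu I$ is singular, so there is a nonzero vector $x$ with $(A - \mu I)x = -Ex$. Setting $y = U^* x \neq 0$ and $F = U^*EU$ (so that $\|F\|_2 = \|E\|_2$ by unitary invariance of the spectral norm), conjugation turns this into $(D + N - \mu I)\,y = -Fy$.

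Because the diagonal of $D$ is precisely $\Lambda(A)$, the hypothesis $\mu \notin \Lambda(A)$ makes the diagonal matrix $D - \mu I$ invertible, with $\|(D-\mu I)^{-1}\|_2 = 1/\delta$; hence $D + N - \mu I$ is invertible as well. Rearranging to $y = -(D+N-\mu I)^{-1}Fy$ and taking spectral norms (using $y \neq 0$) reduces everything to a resolvent estimate:
\[ 1 \le \bigl\|(D+N-\mu I)^{-1}\bigr\|_2\,\|E\|_2. \]

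The heart of the argument is bounding this resolvent norm, and the structural fact that makes the sum $\sum_{k=0}^{n-1}\|N\|_2^k$ appear is that $(D-\mu I)^{-1}N$ is strictly upper triangular, hence nilpotent of index at most $n$. Factoring $D + N - \mu I = (D-\mu I)\bigl(I + (D-\mu I)^{-1}N\bigr)$, the Neumann series for the inverse terminates:
\[ \bigl(I + (D-\mu I)^{-1}N\bigr)^{-1} = \sum_{k=0}^{n-1}\bigl(-(D-\mu I)^{-1}N\bigr)^k. \]
Submultiplicativity of $\|\cdot\|_2$ together with $\|(D-\mu I)^{-1}\|_2 = 1/\delta$ then gives $\|(D+N-\mu I)^{-1}\|_2 \le \delta^{-1}\sum_{k=0}^{n-1}(\|N\|_2/\delta)^k$, and combining with the previous display yields
\[ \delta \le \|E\|_2 \sum_{k=0}^{n-1}\frac{\|N\|_2^k}{\delta^k}. \]

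The last step is to clear the $\delta^{-k}$ factors. When $\delta \ge 1$ each term satisfies $\delta^{-k}\le 1$ and the stated bound $\delta \le \|E\|_2\sum_{k=0}^{n-1}\|N\|_2^k$ drops out immediately; this is the regime in which the lemma is invoked. I expect the only genuine obstacle to be the resolvent computation above---verifying that nilpotency of $(D-\mu I)^{-1}N$ terminates the Neumann series at $n$ terms and produces exactly the geometric-type sum---while the passage from ``$A+E-\mu I$ singular'' to the scalar inequality is routine use of unitary invariance and submultiplicativity of the spectral norm. For completeness, clearing denominators in the complementary case gives $\delta^n \le \|E\|_2\sum_{k=0}^{n-1}\|N\|_2^k$ when $\delta < 1$, which recovers the sharp form $\delta \le \max\{\theta,\theta^{1/n}\}$ with $\theta = \|E\|_2\sum_{k=0}^{n-1}\|N\|_2^k$ and reduces to the stated bound exactly when $\theta \ge 1$.
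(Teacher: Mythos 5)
Your argument is, in essence, the textbook proof of the cited result: the paper itself offers no proof of this lemma (it is quoted from Golub--Van Loan, Theorem 7.2.3), and the route you take---Schur conjugation, the factorization $D+N-\mu I = (D-\mu I)\bigl(I+(D-\mu I)^{-1}N\bigr)$, and termination of the Neumann series because $(D-\mu I)^{-1}N$ is strictly upper triangular and hence nilpotent of index at most $n$---is exactly how the source proves it. Every step is correct: $\|(D-\mu I)^{-1}\|_2 = 1/\delta$ since $D$ is diagonal with the eigenvalues of $A$ on its diagonal, unitary invariance gives $\|F\|_2 = \|E\|_2$, and submultiplicativity yields $\delta \le \|E\|_2 \sum_{k=0}^{n-1} \|N\|_2^k/\delta^k$.

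The substantive point is the one your last paragraph raises, and you should press it harder: what your proof (and the actual Theorem 7.2.3 in Golub--Van Loan) establishes is $\delta \le \max\{\theta, \theta^{1/n}\}$ with $\theta = \|E\|_2\sum_{k=0}^{n-1}\|N\|_2^k$, whereas the lemma as printed drops the $\theta^{1/n}$ branch, and in that form it is false in general. For instance, $A = \begin{psmallmatrix} 0 & 1 \\ 0 & 0 \end{psmallmatrix}$ and $E = \begin{psmallmatrix} 0 & 0 \\ \varepsilon & 0 \end{psmallmatrix}$ give $\delta = \sqrt{\varepsilon}$ while $\theta = 2\varepsilon$, so $\delta > \theta$ for small $\varepsilon$. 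Your side remark that $\delta \ge 1$ (equivalently, that $\theta \ge 1$) is ``the regime in which the lemma is invoked'' is the one inaccuracy: in the proof of Theorem \ref{thm:main} the lemma is applied with $n = 4$, $\|N\|_2 \le \|N\|_F = 1$, and $\|E\|_2 \le \gamma < 2/25$, so $\theta \le 4\gamma < 1$, and the conclusion drawn there, $\min_{\lambda}|\lambda - \mu| \le 4\gamma$, lies squarely in the branch your proof does not (and cannot) deliver; the honest bound in that regime is only $\min_{\lambda}|\lambda-\mu| \le (4\gamma)^{1/4} \le (8/25)^{1/4} \approx 0.752$, which gives $\mathrm{diam}(\Lambda(A)) \le (2+\sqrt{3}) + 2(4\gamma)^{1/4} \approx 5.24$, too weak to contradict $\sqrt{21} \approx 4.58$. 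So you have given a correct proof of the correct theorem; the statement you were asked to prove is stronger than the truth, and the discrepancy is load-bearing in the paper's $n=4$ case, which would need to be repaired by other means.
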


Using the above lemmas, we are now prepared to prove Theorem \ref{thm:main}.

\begin{proof}[Proof of Theorem \ref{thm:main}]
Suppose, contrary to the theorem statement, that there exists an $n \times n$ non-negative non-symmetric matrix $A$ with $\|A\|_{\max} = 1$ that maximizes $\mathrm{diam}\big(\Lambda(A)\big) / \|A\|_{\max}$ over either all $n \times n$ non-negative matrices or all $n \times n$ zero-diagonal matrices. By Lemma \ref{lm:symmetric-part-bound},
\[\max_{\lambda,\lambda' \in \Lambda(A)} \big|\mathrm{Re}(\lambda) - \mathrm{Re}(\lambda') \big| \le \mathrm{diam}\left( \Lambda \left( \frac{A + A^T}{2} \right) \right).\]
Since $A$ is non-symmetric, $(A+A^T)/2$ is not a $(0,1)$ matrix. Therefore, by Theorem \ref{thm:spread}, the maximum $\mathrm{diam}(\Lambda(A))$ is not achieved by a pair of real-valued eigenvalues. In addition, by Corollary \ref{co:bounds}, the modulus of a non-real eigenvalue is at most $\frac{n}{2}$, so $|\lambda - \lambda'| \le n$ for any two non-real eigenvalues $\lambda$ and $\lambda'$. We note that $n$ is less than both $\frac{2n}{\sqrt{3}} - \frac{1}{2\sqrt{3}n}$ and $\frac{2n-1}{\sqrt{3}}$ for $n \ge 4$, and by Lemma \ref{lm:small_spread} is non-optimal for $n<4$. Therefore, $\mathrm{diam}(\Lambda(A))$ cannot be achieved by a pair of non-real eigenvalues either, and we need only consider the case where one eigenvalue is real and the other is non-real.

Let $\lambda$ be the non-real eigenvalue and $\lambda'$ be the real one. By Lemma \ref{lm:real-part}, $\lambda' \ge -\frac{n}{2}$. If $\lambda' \le \frac{n}{2}$, then both $\lambda$ and $\lambda'$ lie in the disk of radius $\frac{n}{2}$ centered at the origin, so $|\lambda - \lambda'| \le n$, which is not the maximum spread. Otherwise, if $\lambda' > \frac{n}{2}$, then $|\lambda - \lambda'|$ is less than $|\lambda - \lambda_{\max}|$, where $\lambda_{\max}$ is the Perron eigenvalue. Therefore, $\mathrm{diam}(\Lambda(A))$ must be achieved between the Perron eigenvalue, denoted by $\lambda_{\max}$, and some non-real eigenvalue, denoted by $\lambda$.

By Lemma \ref{lm:bounds},
\begin{align*}
| \lambda_{\max} - \lambda|^2 &= \lambda_{\max}^2 - 2 \lambda_{\max} \, \mathrm{Re}(\lambda) + |\lambda|^2 \\
&\le \lambda_{\max}^2 + 2 \lambda_{\max} \sqrt{\frac{n^2+3n\lambda_{\max}- 4 \lambda_{\max}^2 - \gamma}{8}} + \frac{n^2+n\lambda_{\max}- 2 \lambda_{\max}^2 - \gamma}{4} \\
&= n^2 f(\lambda_{\max}/n, \gamma/n^2),
\end{align*}
where 
\[f(x, \eta):= x^2 + 2x\sqrt{\frac{1+3x-4x^2 - \eta}{8}} + \frac{1+x-2x^2-\eta}{4}.\]
By inspection, we have $f(x,0)<1.315609 < 4/3$ for $x \in [0,1]$; the maximum is achieved by setting $\lambda_{\max}$ equal to $n$ times the largest root of $7 + 61 x - 2 x^2 - 592 x^3 + 576 x^4$. Therefore,
\begin{equation}\label{ineq:diam}
\mathrm{diam}\big(\Lambda(A) \big) = | \lambda_{\max} - \lambda| < \sqrt{1.315609 \, n^2} = 1.147 \, n.
\end{equation}
We note that $1.147 \, n$ is strictly less than $2n/\sqrt{3}$ for all $n$, strictly less than $2n/\sqrt{3} - 1/(2 \sqrt{3} n)$ for $n>6$, and strictly less than $(2n-1)/\sqrt{3}$ for all $n$ sufficiently large. Now consider $n \le 6$. By Theorem \ref{thm:spread},  $n =3$ and $n = 6$ achieve the bound $2n/\sqrt{3}$. By Lemma \ref{lm:small_spread}, when $n = 5$ the maximum spread is at least $\sqrt{33} > 1.147\, n$. When $n = 2$, $A$ has no non-real eigenvalues. All that remains is $n =4$.

By Lemma \ref{lm:small_spread}, when $n = 4$ the maximum spread is at least $\sqrt{21}$. By inspection, $f(x,0)<21$ for all $x \not \in [0.85177,0.89726]$ (so $\lambda_{\max}$ must be in $[3.4071,3.58903]$) and $\max_{x \in [0,1]} f(x,1/200)< 21$ (so $\eta$ must be less than $1/200$); see Figure \ref{fig:2plots}. Therefore, $\gamma < 4^2 \times 1/200 = 2/25$. 

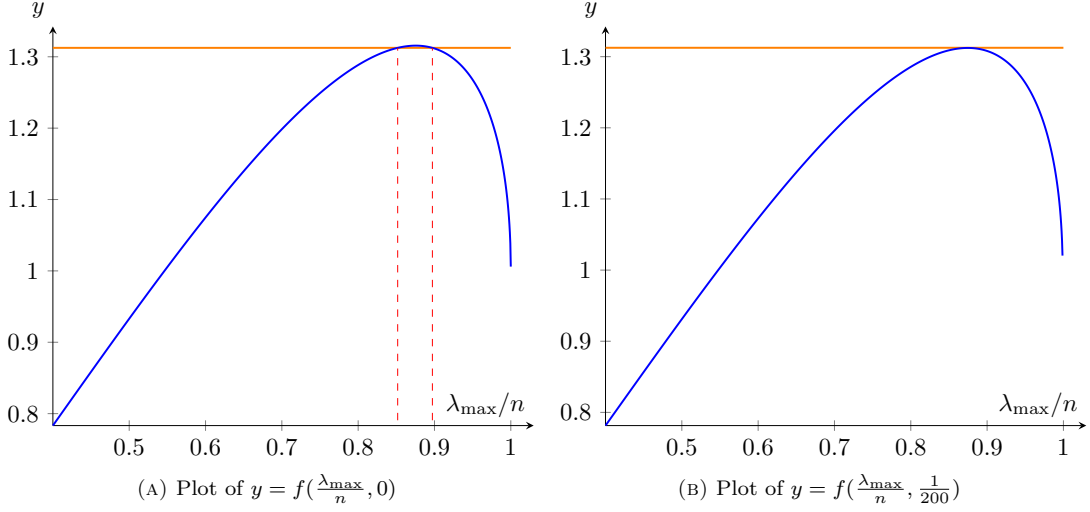
\begin{figure}[t]
\centering
\subfloat[Plot of $y = f(\frac{\lambda_{\max}}{n},0)$]{
  \resizebox{2.8in}{!}{\begin{tikzpicture}
    \begin{axis}[axis lines = center, 
    xlabel = {$\lambda_{\max}/n$}, ylabel = {$y$}, xmax=1.03, ymax=21/16+0.03,
    y label style={at={(axis description cs:-0.03,1)},anchor=south}]
      \addplot[domain=0.4:1,range=0.77:1.4,samples=1000,orange,thick]{21/16};
      \addplot[domain=0.4:1, range=0.77:1.4,samples=1000,blue,thick]{x*x + 2*x*sqrt((1+3*x-4*x*x)/8) + (1+x-2*x*x)/4};
      \draw[red, dashed] (axis cs:0.851774,0.77) -- (axis cs:0.851774,21/16);
      \draw[red, dashed] (axis cs:0.897257,0.77) -- (axis cs:0.897257,21/16);
    \end{axis}
    \end{tikzpicture}}
}
\subfloat[Plot of $y = f(\frac{\lambda_{\max}}{n},\frac{1}{200})$]{
  \resizebox{2.8in}{!}{\begin{tikzpicture}
    \begin{axis}[axis lines = center, 
    xlabel = {$\lambda_{\max}/n$}, ylabel = {$y$}, xmax=1.03, ymax=21/16+0.03,
    y label style={at={(axis description cs:-0.03,1)},anchor=south}]
      \addplot[domain=0.4:1,range=0.77:1.4,samples=1000,orange,thick]{21/16};
      \addplot[domain=0.4:1, range=0.77:1.4,samples=1000,blue,thick]{x*x + 2*x*sqrt((1+3*x-4*x*x-0.005)/8) + (1+x-2*x*x-0.005)/4};
    \end{axis}
    \end{tikzpicture}}
}

\caption{\textsc{(a)} The left plot shows $y = f(\frac{\lambda_{\max}}{n},0)$ (blue) and $y = \frac{21}{16}$ (orange). The blue curve is below the orange line for all $x \not\in [0.85177, 0.89726]$, which is marked by red dashed lines. \textsc{(b)} The right plot shows $y = f(\frac{\lambda_{\max}}{n},\frac{1}{200})$ (blue), which is always below $y = \frac{21}{16}$ (orange). A simple calculation shows that the maximum value of $f(\frac{\lambda_{\max}}{n},\frac{1}{200})$ is approximately 1.31229, which is less than $\frac{21}{16}$.}

\label{fig:2plots}
\end{figure}

By definition, $\gamma \ge 4\lambda_{\max} - \bm{1}^T \hat{A} \bm{1}$, so
$$0.02 > \frac{\gamma}{4} \ge \lambda_{\max} - \frac{\bm{1}^T \hat{A} \bm{1}}{4} \ge 0.$$
Since $\lambda_{\max} \in [3.4071, 3.58903]$, we obtain that $\frac{\bm{1}^T \hat{A} \bm{1}}{4} \in [3.3871, 3.58903]$, which means that the sum of the entries of $\hat{A}$ is in $[13.5484,14.35612]$. Let $\mathrm{round}(\cdot)$ of a matrix denote the matrix resulting from entrywise rounding. In addition,
\begin{align}
    \sum_{i,j = 1}^n \left| \hat A_{ij} - \mathrm{round}(\hat A)_{ij} \right| &\le 2 \sum_{i,j = 1}^n \left|A_{ij} - \mathrm{round}(A)_{ij} \right| \nonumber \\
    &\le 4  \sum_{i,j=1}^n A_{ij}(1-A_{ij}) = 2 \sum_{i,j=1}^n (A_{ij}(1-A_{ij}) + A_{ji}(1-A_{ji})) \nonumber \\
&= 2 \sum_{i,j=1}^n (\max\{A_{ij}, A_{ji}\}+\min\{A_{ij}, A_{ji}\}-A_{ij}^2-A_{ji}^2) \le 2 \gamma < 0.16. \label{ineq:gamma}
\end{align}
Therefore, $\bm{1}^T \mathrm{round}(\hat A) \bm{1} = 14$. The bound $\frac{2}{25} > \gamma \ge 1+A_{ii} -2 A_{ii}^2$ implies that zeros cannot occur on the diagonal, so $\mathrm{round}(\hat{A})$ has the following form, up to row and column permutations:
$$\mathrm{round}(\hat{A}) = \begin{pmatrix}
1 & 1 & 1 & 1 \\
1 & 1 & 1 & 1 \\
1 & 1 & 1 & 0 \\
1 & 1 & 0 & 1
\end{pmatrix}. $$
The bound $\frac{2}{25} > \gamma \ge 1+\min\{A_{ij}, A_{ji}\} - A_{ij}^2 - A_{ji}^2 \ge 1 - \max\{A_{ij}, A_{ji}\}^2$ implies that $\max\{A_{ij}, A_{ji}\} \in [0.95917, 1]$, so the rounded value of the maximum entry equals 1. As a result, there is only a single possibility for $\mathrm{round}(A)$, up to reflection across its diagonal:
\begin{equation*}
\mathrm{round}(A) = \begin{pmatrix}
1 & 1 & 1 & 1 \\
1 & 1 & 1 & 1 \\
1 & 1 & 1 & 1 \\
1 & 1 & 0 & 1
\end{pmatrix}.    
\end{equation*}
The spectrum of this matrix $\mathrm{round}(A)$ is $(0, 0, 2-\sqrt{3}, 2+\sqrt{3})$. To finish the proof, we apply the eigenvalue perturbation bound from Lemma \ref{lm:perturb} to $\mathrm{round}(A)$. Let $E = A - \mathrm{round}(A)$. Given that the sum of squared magnitudes of the eigenvalues of $\mathrm{round}(A)$ is $14$ and the squared Frobenius norm of $\mathrm{round}(A)$ is $15$, the Frobenius norm of the strictly upper-triangular part of the Schur decomposition of $\mathrm{round}(A)$ is $\|N\|_F = 1$. In addition, by Inequality \eqref{ineq:gamma},
\[ \|E\|_2 \le \|E\|_F \le \sum_{i,j = 1}^n \left|A_{ij} - \mathrm{round}(A)_{ij} \right| \le \gamma.\]
By Lemma \ref{lm:perturb}, for any $\mu \in \Lambda(A)$,
$$ \min_{\lambda \in \Lambda(\mathrm{round}(A))} |\lambda - \mu| \le \|E\|_2 \sum_{k=0}^{n-1} \|N\|_2^k \le \|E\|_F \sum_{k=0}^{n-1} \|N\|_F^k \le 4 \gamma. $$
Therefore,
\[\mathrm{diam}(\Lambda(A)) \le \mathrm{diam}(\Lambda(\mathrm{round}(A))) + 8 \gamma < (2+\sqrt{3}) + 16/25 < \sqrt{21},\]
a contradiction.
\end{proof}

\section*{Acknowledgements}
This material is based upon work supported by the National Science Foundation under grant no. DMS-2513687.
This manuscript is the result of a summer research project through the MIT Research Science Institute (RSI). The first author would like to thank Dr. Tanya Khovanova, Genaro Laymuns, AnaMaria Perez, Prof. Roman Bezrukavnikov, and Dr. Jonathan Bloom for providing helpful feedback on early versions of this project. The first author would also like to thank the Center for Excellence in Education (CEE) and MIT for organizing and funding this research opportunity. The authors are grateful to Louisa Thomas for improving the style of presentation. 

{ \small 
	\bibliographystyle{plain}
	\bibliography{main.bib} }

\begin{thebibliography}{10}

\bibitem{andrade2019new}
Enide Andrade, Geir Dahl, Laura Leal, and Mar{\'\i}a Robbiano.
\newblock New bounds for the signless {L}aplacian spread.
\newblock {\em Linear Algebra and its Applications}, 566:98--120, 2019.

\bibitem{andrade2019bounds}
Enide Andrade, Eber Lenes, Exequiel Mallea-Zepeda, Mar{\'\i}a Robbiano, and Jonnathan Rodr{\'\i}guez.
\newblock Bounds for different spreads of line and total graphs.
\newblock {\em Linear Algebra and its Applications}, 579:365--381, 2019.

\bibitem{bao2009laplacian}
Yan-Hong Bao, Ying-Ying Tan, and Yi-Zheng Fan.
\newblock The {L}aplacian spread of unicyclic graphs.
\newblock {\em Applied Mathematics Letters}, 22(7):1011--1015, 2009.

\bibitem{Bendixson}
Ivar Bendixson.
\newblock {Sur les racines d'une équation fondamentale}.
\newblock {\em Acta Mathematica}, 25:359--365, 1902.

\bibitem{Breen22}
Jane Breen, Alex~W.N. Riasanovsky, Michael Tait, and John Urschel.
\newblock Maximum spread of graphs and bipartite graphs.
\newblock {\em Communications of the American Mathematical Society}, 2(11):417--480, 2022.

\bibitem{brooks2024maximum}
George Brooks, William Linz, and Linyuan Lu.
\newblock Maximum spectral gaps of graphs.
\newblock {\em arXiv preprint arXiv:2408.15476}, 2024.

\bibitem{chen2009laplacian}
Yanqing Chen and Ligong Wang.
\newblock The {L}aplacian spread of tricyclic graphs.
\newblock {\em The Electronic Journal of Combinatorics}, pages R80--R80, 2009.

\bibitem{Constantine85}
Gregory Constantine.
\newblock {Lower Bounds on the Spectra of Symmetric Matrices with Nonnegative Entries}.
\newblock {\em Linear Algebra and its Applications}, 65:171--178, 1985.

\bibitem{fan2012edge}
Yi-Zheng Fan and Shaun Fallat.
\newblock Edge bipartiteness and signless {L}aplacian spread of graphs.
\newblock {\em Applicable Analysis and Discrete Mathematics}, pages 31--45, 2012.

\bibitem{fan2008laplacian}
Yi-Zheng Fan, Jing Xu, Yi~Wang, and Dong Liang.
\newblock The {L}aplacian spread of a tree.
\newblock {\em Discrete Mathematics \& Theoretical Computer Science}, 10(Graph and Algorithms), 2008.

\bibitem{Golub13}
Gene~H. Golub and Charles F.~Van Loan.
\newblock {\em {Matrix Computations}}.
\newblock JHU Press, 2013.

\bibitem{gotshall2022spread}
Daniel Gotshall, Megan O’Brien, and Michael Tait.
\newblock On the spread of outerplanar graphs.
\newblock {\em Special Matrices}, 10(1):299--307, 2022.

\bibitem{gregory2001spread}
David~A. Gregory, Daniel Hershkowitz, and Stephen~J. Kirkland.
\newblock The spread of the spectrum of a graph.
\newblock {\em Linear Algebra and its Applications}, 332:23--35, 2001.

\bibitem{Johnson85}
Charles~R. Johnson, Ravinder Kumar, and Henry Wolkowicz.
\newblock Lower bounds for the spread of a matrix.
\newblock {\em Linear Algebra and its Applications}, 71:161--173, 1985.

\bibitem{li2007spread}
Xueliang Li, Jianbin Zhang, and Bo~Zhou.
\newblock The spread of unicyclic graphs with given size of maximum matchings.
\newblock {\em Journal of Mathematical Chemistry}, 42(4):775--788, 2007.

\bibitem{li2022maximum}
Zelong Li, William Linz, Linyuan Lu, and Zhiyu Wang.
\newblock On the maximum spread of planar and outerplanar graphs.
\newblock {\em arXiv preprint arXiv:2209.13776}, 2022.

\bibitem{lin2020aalpha}
Zhen Lin, Lianying Miao, and Shu-Guang Guo.
\newblock The {A$\alpha$}-spread of a graph.
\newblock {\em Linear Algebra and its Applications}, 606:1--22, 2020.

\bibitem{lin2020bounds}
Zhen Lin, Lianying Miao, and Shu-Guang Guo.
\newblock Bounds on the {A$\alpha$}-spread of a graph.
\newblock {\em The Electronic Journal of Linear Algebra}, 36:214--227, 2020.

\bibitem{linz2023maximum}
William Linz, Linyuan Lu, and Zhiyu Wang.
\newblock Maximum spread of k2, t-minor-free graphs.
\newblock {\em Linear Algebra and its Applications}, 676:352--373, 2023.

\bibitem{liu2009spread}
Bolian Liu and Liu Mu-Huo.
\newblock On the spread of the spectrum of a graph.
\newblock {\em Discrete mathematics}, 309(9):2727--2732, 2009.

\bibitem{liu2024graph}
Lele Liu.
\newblock Graph limits and spectral extremal problems for graphs.
\newblock {\em SIAM Journal on Discrete Mathematics}, 38(1):590--608, 2024.

\bibitem{liu2010signless}
Muhuo Liu and Bolian Liu.
\newblock The signless {L}aplacian spread.
\newblock {\em Linear algebra and its applications}, 432(2-3):505--514, 2010.

\bibitem{Minc88}
Henryk Minc.
\newblock {\em Nonnegative Matrices}.
\newblock Wiley, 1988.

\bibitem{Mirsky65}
Leon Mirsky.
\newblock The spread of a matrix.
\newblock {\em Mathematika}, 3:127--130, 1965.

\bibitem{oliveira2010bounds}
Carla~Silva Oliveira, Leonardo~Silva De~Lima, Nair Maria~Maia de~Abreu, and Steve Kirkland.
\newblock Bounds on the {Q}-spread of a graph.
\newblock {\em Linear algebra and its applications}, 432(9):2342--2351, 2010.

\bibitem{urschel2021graphs}
John~C. Urschel.
\newblock {\em Graphs, Principal Minors, and Eigenvalue Problems}.
\newblock PhD thesis, Massachusetts Institute of Technology, 2021.

\bibitem{xu2011laplacian}
Ying Xu and Jixiang Meng.
\newblock The {L}aplacian spread of quasi-tree graphs.
\newblock {\em Linear algebra and its applications}, 435(1):60--66, 2011.

\bibitem{you2017distance}
Lihua You, Liyong Ren, and Guanglong Yu.
\newblock Distance and distance signless {L}aplacian spread of connected graphs.
\newblock {\em Discrete Applied Mathematics}, 223:140--147, 2017.

\end{thebibliography}

\end{document}